\DeclareMathOperator{\PGL}{PGL}
\DeclareMathOperator{\HH}{H}
\DeclareMathOperator{\Cent}{Cent}
\DeclareMathOperator{\Rad}{Rad}
\DeclareMathOperator{\et}{\text{\it \'et}}
\DeclareMathOperator{\PGO}{PGO}
\DeclareMathOperator{\PSp}{PGSp}
\renewcommand{\sc}{sc}
\newtheorem{lem}{Lemma}
\newtheorem*{thm*}{Theorem}
\newtheorem{thm}{Theorem}
\newtheorem*{cor*}{Corollary}
\title{Grothendieck---Serre conjecture via embeddings}
\thanks{V. Petrov is partially supported by PIMS fellowship and RFBR~08-01-00756; V. Petrov and
A. Stavrova are supported by RFBR~09-01-00878; I.Panin is supported by the joint DFG--RFBR project
09-01-91333-NNIO-a}
\author{I. Panin}
\author{V. Petrov}
\author{A. Stavrova}
\date{20.05.2009}
\begin{document}

\begin{abstract}
Assume that $R$ is a semi-local regular ring containing an infinite perfect
field, or that $R$ is a semi-local ring of several points on a smooth scheme over an infinite field.
Let $K$ be the field of fractions of $R$.
Let $H$ be a strongly inner adjoint simple algebraic group of type $E_6$ or $E_7$ over $R$,
or any twisted form of one of the split groups of classical type
${\mathrm O}^+_{n,R}$, $n\ge 4$; $\PGO_{n,R}$, $n\ge 4$; $\PSp_{2n,R}$, $n\ge 2$;
$\PGL_{n,R}$, $n\ge 2$.
We prove that the kernel of the map
$$ \HH^1_{\et}(R,H)\to \HH^1_{\et}(K,H) $$
\noindent
induced by the inclusion of $R$ into $K$ is trivial. This continues the recent series of
papers by the authors and N.Vavilov on the Grothendieck---Serre conjecture~\cite[Rem. 1.11]{Gr}.
\end{abstract}

\maketitle


We prove the following theorem.

\begin{thm}\label{th:main}
Let $R$ be a semi-local domain. Assume moreover that $R$ is regular and contains a infinite perfect
field $k$, or that
$R$ is a semi-local ring of several points on a $k$-smooth scheme over an infinite field $k$. Let $K$ be
the field of fractions of $R$. Let $H$ be a simple group scheme over $R$ that is in the following list:\\
\indent$\bullet$ an adjoint strongly inner simple group of type $E_6$;\\
\indent$\bullet$ an adjoint strongly inner simple group of type $E_7$;\\
\indent$\bullet$ a twisted form of the split group ${\mathrm O}^+_{n,R}$, $n\ge 4$;\\
\indent$\bullet$ a twisted form of the split group $\PGO_{n,R}$, $n\ge 4$;\\
\indent$\bullet$ a twisted form of the split group $\PSp_{2n,R}$, $n\ge 2$;\\
\indent$\bullet$ a twisted form of the split group $\PGL_{n,R}$, $n\ge 2$.\\
Then
the map
$$\HH^1_{\et}(R, H) \to \HH^1_{\et}(K, H)$$
induced by the inclusion of $R$ into $K$ has trivial kernel.
\end{thm}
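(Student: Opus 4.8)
The plan is to treat every group in the list uniformly by a single mechanism: realize each $H$ as the stabilizer of a tensor inside a \emph{special} group $G$ (one of $\GL(V)$, $\SL(V)$, $\Sp(V,\omega)$), so that $H$-torsors become twisted forms of a concrete algebraic structure and the problem collapses to a purity statement for that structure. Concretely, I would fix the defining representation $V$ of $H$ together with its invariant tensor $t_0$ --- the algebra multiplication for $\PGL_n$; the quadratic form (and volume) for $\OO^+_n$; the multiplication and involution for $\PGO_n$ and $\PSp_{2n}$; the cubic norm for $E_6$; the symplectic form and quartic invariant for $E_7$ --- take $G$ to be the ambient special group, and let $Y = G/H$ be the $G$-orbit of $t_0$. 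By Skolem--Noether and the classification of the relevant invariants the stabilizer of $t_0$ in $G$ is exactly $H$; since $H$ is reductive, $Y$ is an affine homogeneous space and $G \to Y$ is an $H$-torsor.

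The hypothesis that $H$ be \emph{strongly inner} is precisely what makes $G$ split: it forces the Tits algebras to be trivial, so that the $27$-dimensional ($E_6$) and $56$-dimensional ($E_7$) representations are honestly defined over $R$ rather than over a nontrivial Azumaya algebra, and the analogous triviality underlies the classical cases. With this in place I would run the exact sequence of pointed sets $G(R) \to Y(R) \xrightarrow{\partial} \HH^1_{\et}(R,H) \to \HH^1_{\et}(R,G)$. Because $G$ is special, projective modules and symplectic spaces over the semi-local ring $R$ are free, so $\HH^1_{\et}(R,G) = *$, and likewise over $K$. Hence a class $\xi$ in the kernel equals $\partial(y)$ for some $y \in Y(R)$, and its vanishing over $K$ says exactly that $y_K$ lies in the base $G(K)$-orbit; what remains is to show that $y$ then lies in the base $G(R)$-orbit.

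Translated back, this is a purity statement: a twisted form of the structure over $R$ that becomes split over $K$ is already split over $R$. For the classical groups I would factor it through the underlying Azumaya algebra: injectivity of $\Br(R) \to \Br(K)$ for the regular ring $R$ --- cohomological purity, i.e. injectivity of $\HH^2_{\et}(R,\Gm) \to \HH^2_{\et}(K,\Gm)$, and more generally $\HH^2_{\et}(R,\mu) \hookrightarrow \HH^2_{\et}(K,\mu)$ for the central kernel $\mu$ --- shows the algebra is split as an algebra, after which freeness over semi-local $R$ trivializes it and a purity theorem for the residual Hermitian or quadratic datum handles the involution or the form. For $E_6$ and $E_7$ the same argument requires purity for the cubic norm (Albert algebra) and for the symplectic-quartic (Freudenthal) structure.

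I expect this purity/extension step to be the main obstacle, and it is where the regularity of $R$ and the presence of an infinite perfect field are indispensable. To show that a generic isomorphism of these structures extends over all of $R$, I would spread $y$ out to a smooth relative curve, use a Nisnevich/Gabber presentation to reduce to the affine line, and invoke homotopy invariance for torsors under the special group $G$ to push the generic trivialization to an integral one --- the geometric presentation and moving-lemma technology developed in the earlier papers of this series. The exceptional cases $E_6$ and $E_7$ should be the most delicate, since the purity input for Albert and Freudenthal structures is not classical and must be assembled from the geometry of the orbit $Y$ together with the Grothendieck--Serre statement already known for the simply connected groups.
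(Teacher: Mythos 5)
Your orbit--stabilizer translation breaks down at two points before the main difficulty is even reached. First, the stabilizer identifications for the exceptional cases are wrong: the subgroup of $\GL_{27}$ preserving the cubic norm is the \emph{simply connected} group of type $E_6$ (its center $\mu_3$ acts by scalars, which multiply the cubic form by cube roots of unity, i.e.\ preserve it), and likewise the stabilizer in $\GL_{56}$ of the symplectic form together with the quartic invariant is simply connected $E_7$ (the center $\mu_2$ fixes both tensors). The theorem is about \emph{adjoint} groups, so your $H$ is not the stabilizer of any such $t_0$; to pass from $H^{\sc}$ to $H=H^{\sc}/\mu$ you would additionally need the sequence $1\to\mu\to H^{\sc}\to H\to 1$, injectivity of $\HH^2_{\et}(R,\mu)\to \HH^2_{\et}(K,\mu)$, and a twisting argument to handle lifts that are generically in the image of the connecting map $H(K)\to \HH^1_{\et}(K,\mu)$ --- none of which appears in the proposal. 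Second, for a general twisted form in the classical cases (e.g.\ $H=\PGL_1(A)$ with $A$ a nontrivial Azumaya algebra) there is no defining representation on a free $R$-module at all; after twisting, the statement you must prove becomes ``two structures over $R$ that become isomorphic over $K$ are isomorphic over $R$,'' not ``generically split implies split.''

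The deeper problem is that your ``purity step'' is not a reduction --- it \emph{is} the theorem. When $\HH^1_{\et}(R,G)$ and $\HH^1_{\et}(K,G)$ are trivial, the kernel of $\HH^1_{\et}(R,H)\to \HH^1_{\et}(K,H)$ is precisely the set of $G(R)$-orbits on $\{y\in Y(R): y_K\in G(K)\cdot t_0\}$, so the assertion ``a form of $t_0$ over $R$ that splits over $K$ splits over $R$'' is a verbatim restatement of what is to be proved; the translation buys concreteness but no progress. The tools you then invoke do not bear on it: homotopy invariance for torsors under the special ambient group $G$ says nothing about the orbit set $Y(R)/G(R)$, and the geometric-presentation machinery of \cite{PaSV}, \cite{Pa-newpur} proves Grothendieck--Serre for \emph{isotropic} groups, whereas the entire difficulty here is that the twisted form $H$ may be anisotropic. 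For the classical cases your route can be completed by citing substantial known theorems (Auslander--Goldman injectivity of $\Br(R)\to\Br(K)$, purity for quadratic spaces, with characteristic-$2$ caveats the paper's hypotheses do not exclude), but for adjoint $E_6$ and $E_7$ the needed purity for Albert/Freudenthal structures was exactly what was \emph{not} known, and your sketch offers no proof of it. The paper circumvents this entirely: using \cite{PS-tind} it realizes $H$ as a direct factor of $L/\Rad(L)$ for a Levi subgroup $L$ of a parabolic in an \emph{isotropic} simple group $G$, applies the isotropic Grothendieck--Serre theorem of \cite{PaSV} to $G$, and descends first to $L$ via the injectivity $\HH^1_{\et}(R,L)\to \HH^1_{\et}(R,G)$ of \cite[Exp.\ XXVI Cor.\ 5.10]{SGA}, then to $L/\Rad(L)$ via the cohomology sequence of $1\to\Rad(L)\to L\to L/\Rad(L)\to 1$, where only Hilbert 90 and injectivity of $\HH^2_{\et}$ of a quasi-trivial torus (Shapiro plus purity for $\Gm$) are required.
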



Note that we follow~\cite{SGA} in abbreviating ``reductive group
scheme'' to ``reductive group''.

Recall that the {\it radical} $\Rad(G)$ of a reductive group $G$ is the unique maximal torus of the
group scheme center $\Cent(G)$ of $G$~\cite[D\'ef. 4.3.6]{SGA}. The quotient $G/\Rad(G)$ is a semisimple
group.

\begin{lem}\label{lem:rad}
Let $R$ be a semi-local regular ring with a field of fractions $K$, and let $G$ be an isotropic reductive group over $R$. Let $P$
be a parabolic subgroup of $G$, $L$ be a Levi subgroup of $P$. If the natural map
$$
\HH^1_{\et}(R,G)\to \HH^1_{\et}(K,G)
$$
has trivial kernel, then the maps
$$
\HH^1_{\et}(R,L)\to \HH^1_{\et}(K,L),\qquad
\HH^1_{\et}(R,L/\Rad(L))\to \HH^1_{\et}(K,L/\Rad(L))
$$
have trivial kernels.
\end{lem}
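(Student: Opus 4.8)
The plan is to reduce the two claimed triviality statements to the given hypothesis about $G$ using the standard functorial machinery of parabolic induction in nonabelian étale cohomology. The starting observation is that a Levi subgroup $L$ of a parabolic $P \subseteq G$ sits in a chain of maps of pointed sets, and the composite $\HH^1_{\et}(R,L) \to \HH^1_{\et}(R,P) \to \HH^1_{\et}(R,G)$ is induced by the inclusions $L \hookrightarrow P \hookrightarrow G$. The key structural fact I would invoke is that $P$ is the semidirect product of $L$ with its unipotent radical $U$, so that $L \hookrightarrow P$ is a section of the projection $P \twoheadrightarrow L$; since $U$ is a successive extension of vector groups $\Ga$, which have trivial higher étale cohomology over any ring (being special groups in the sense of Serre), the map $\HH^1_{\et}(-,L) \to \HH^1_{\et}(-,P)$ is a bijection. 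This holds compatibly over both $R$ and $K$.

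**First I would** assemble the commutative diagram
$$
\xymatrix{
\HH^1_{\et}(R,L) \ar[r] \ar[d] & \HH^1_{\et}(R,G) \ar[d] \\
\HH^1_{\et}(K,L) \ar[r] & \HH^1_{\et}(K,G)
}
$$
and chase an element $\xi \in \HH^1_{\et}(R,L)$ lying in the kernel of the left vertical arrow, i.e. one whose image in $\HH^1_{\et}(K,L)$ is trivial. Pushing $\xi$ forward to $\HH^1_{\et}(R,G)$ via the horizontal map and using commutativity, its image in $\HH^1_{\et}(K,G)$ is trivial; by the hypothesis on $G$ (trivial kernel), the image of $\xi$ in $\HH^1_{\et}(R,G)$ is already trivial. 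The task is then to descend this triviality from $G$ back down to $L$.

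**The hard part will be** this descent step, because the map $\HH^1_{\et}(R,L) \to \HH^1_{\et}(R,G)$ need not be injective on the nose — one must control its fibers. Here I would use the twisting (torsion) technique: the fiber over the basepoint of $\HH^1_{\et}(R,G)$ is described, via the exact sequence of pointed sets associated to $L \hookrightarrow G$, by the orbits of $\HH^0_{\et}(R,G/L)$, equivalently by the $R$-points of the projective homogeneous scheme $G/P$ modulo the action of $G(R)$. The precise statement is that $\xi$ being trivial in $\HH^1_{\et}(R,G)$ means the corresponding $L$-torsor, when induced to $G$, admits a reduction witnessed by a point of $(G/P)(R)$; lifting such a point compatibly with the already-known triviality over $K$ is what forces $\xi$ itself to be trivial. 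Concretely I expect to invoke that $(G/P)(R) \to (G/P)(K)$ together with $G(R)$-orbit considerations, combined with the regularity of $R$ and a going-up argument for $R$-points of smooth $R$-schemes, pins down the reduction over $R$.

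**Finally,** for the second map I would observe that $L/\Rad(L)$ is the adjoint semisimple quotient of $L$ and that the projection $L \twoheadrightarrow L/\Rad(L)$ has kernel the central torus $\Rad(L)$. Since $\Rad(L)$ is an $R$-torus, the relevant portion of the exact cohomology sequence relates the kernels of the $R \to K$ maps for $L$ and for $L/\Rad(L)$; the injectivity of $\HH^1_{\et}(R,\Rad(L)) \to \HH^1_{\et}(K,\Rad(L))$ for a torus over a regular semi-local ring (a consequence of purity for tori) lets me transfer the triviality result already established for $L$ to $L/\Rad(L)$. I would carry out this last transfer by a second diagram chase, so that the two conclusions follow in tandem once the central descent for $L$ is in hand.
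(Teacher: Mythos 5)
Your skeleton is the same as the paper's --- first handle $L$ by comparison with $G$, then pass to $L/\Rad(L)$ via the central extension $1\to\Rad(L)\to L\to L/\Rad(L)\to 1$ --- but both of the steps that carry the actual weight have gaps. For the first step, the map $\HH^1_{\et}(S,L)\to\HH^1_{\et}(S,G)$ \emph{is} injective whenever $S$ is semi-local: this is \cite[Exp. XXVI Cor. 5.10 (i)]{SGA}, which is exactly what the paper cites, and the geometric content behind it is the conjugacy under $G(S)$ of parabolic subgroups of the same type over a semi-local base, i.e.\ transitivity of $G(R)$ on $(G/P)(R)$. Your reduction to this transitivity (via $\HH^1(-,L)\cong\HH^1(-,P)$ and the fiber description) is correct, but there you stop: you neither cite nor prove the transitivity, and the ingredients you propose in its place --- regularity of $R$ and a ``going-up'' passage from $K$-points to $R$-points of smooth $R$-schemes --- are the wrong ones. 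Regularity is irrelevant to this step (semi-locality is what matters), and ``the transporter from $P$ to $P'$ has a $K$-point, hence an $R$-point'' is itself a Grothendieck---Serre-type statement for $P$-torsors, so arguing this way is circular.

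The second step has a more serious omission. Given $\eta\in\HH^1_{\et}(R,H)$, $H=L/\Rad(L)$, trivial over $K$, the only way to bring $L$ into play is to lift $\eta$ along $\HH^1_{\et}(R,L)\to\HH^1_{\et}(R,H)$, and the obstruction to lifting is the boundary class $\delta(\eta)\in\HH^2_{\et}(R,\Rad(L))$. Since $\delta(\eta)$ restricts to $0$ over $K$, what is needed is injectivity of $\HH^2_{\et}(R,\Rad(L))\to\HH^2_{\et}(K,\Rad(L))$; this Brauer-group-type injectivity (\cite[Ch. III, Example 2.22]{Milne} together with Shapiro's lemma) is where the regularity of $R$ actually enters the lemma, and you never mention $\HH^2$ at all. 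The ingredient you invoke instead --- injectivity of $\HH^1_{\et}(R,\Rad(L))\to\HH^1_{\et}(K,\Rad(L))$ by purity --- neither produces a lift nor finishes the argument. Once a lift $\xi$ exists, what finishes the chase is the vanishing $\HH^1_{\et}(K,\Rad(L))=0$: the fiber of $\HH^1_{\et}(K,L)\to\HH^1_{\et}(K,H)$ over the base point is the image of $\HH^1_{\et}(K,\Rad(L))$, so $\xi_K$ is trivial, hence $\xi$ is trivial by the first step, hence so is $\eta$. That vanishing is not a property of arbitrary $R$-tori; it holds here because $\Rad(L)$ is quasi-trivial (a Weil restriction of split tori along finite \'etale extensions), a fact the paper records and you omit. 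So both essential inputs of the second diagram chase --- the $\HH^2$-injectivity and the quasi-triviality of $\Rad(L)$ --- are missing from your proposal.
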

\begin{proof}
By~\cite[Exp. XXVI Cor. 5.10 (i)]{SGA} the maps
$$
\HH^1_{\et}(R, L)\to\HH^1_{\et}(R, G)\quad\mbox{and}\quad\HH^1_{\et}(K, L)\to\HH^1_{\et}(K, G)
$$
are injective, therefore the first map of the claim has trivial kernel. Set $H=L/\Rad(L)$. The short
exact sequence
$$
1\to \Rad(L)\to L\to H\to 1
$$
leads to the commutative diagram
\begin{equation}\label{eq:diag1}
\xymatrix{
\HH^1_{\et}(R,\Rad(L))\ar[r]\ar[d]&\HH^1_{\et}(R, L)\ar[d]\ar[r]&\HH^1_{\et}(R, H)\ar[d]\ar[r]^\delta&\HH^2_{\et}(R, \Rad(L))\ar[d]\\
\HH^1_{\et}(K,\Rad(L))\ar[r]&\HH^1_{\et}(K, L)\ar[r]&\HH^1_{\et}(K, H)\ar[r]^\delta&\HH^2_{\et}(K, \Rad(L)).
}
\end{equation}
If $\Rad(L)$ is a split torus, then we have
$\HH^1_{\et}(R,\Rad(L))=\HH^1_{\et}(K,\Rad(L))=0$, and
the map $\HH^2_{\et}(R, \Rad(L))\to\HH^2_{\et}(K, \Rad(L))$ is injective (for example,~\cite[Ch. III, Example 2.22]{Milne}).
In general $\Rad(L)$ is a Weil restriction of a split torus defined over a finite \'etale extension of $R$,
therefore, the same statements hold by Shapiro's lemma.
The proof is finished by diagram chasing.
\end{proof}

\begin{lem}\label{lem:embed}
Let $R$ be a semi-local connected ring, and let $H$ be a simple group over $R$ of the same type
as in Theorem~\ref{th:main}. Then there exists an isotropic simple group $G$ over $R$ with
a parabolic subgroup $P$, such that $H$ is isomorphic to a direct factor of the quotient
$L/\Rad(L)$ for a Levi subgroup $L$ of $P$.
\end{lem}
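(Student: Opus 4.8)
The plan is to produce, for each of the six families, an enlarged isotropic simple group $G$ in which $H$ reappears as the semisimple quotient of a Levi subgroup after adjoining a single ``hyperbolic'' summand. The construction is algebra-theoretic for the classical types and a twisting of a split diagram for $E_6$ and $E_7$.

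For the classical types I would first record that over the connected semi-local ring $R$ each $H$ in the list is, in the prescribed isogeny class, the (special, resp.\ adjoint) isometry group of a hermitian or quadratic form $(V,h)$ over an Azumaya $R$-algebra with involution $(D,\tau)$ of the appropriate kind; for the inner type $A$ group $H=\PGL_1(A)$ one takes instead the free module $V=A$ with no form. Now enlarge $(V,h)$ by one hyperbolic $D$-plane $\mathbb H$, let $G$ be the isometry group of $(V,h)\perp\mathbb H$ in the same isogeny class as $H$ (for $\PGL_1(A)$ take $G=\PGL_1(M_2(A))$ acting on $A^2$), and let $P$ be the stabilizer of the isotropic $D$-line furnished by the added plane. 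Then $G$ is isotropic, and for the stated rank ranges it is almost-simple of the corresponding classical type, while the Levi $L$ of $P$ decomposes so that $L/\Rad(L)\cong\PGL_1(D)\times H$, exhibiting $H$ as a direct factor. Note that this formula degenerates correctly when $D=R$, since then $\PGL_1(D)$ is trivial and $L/\Rad(L)\cong H$.

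For the exceptional cases I would argue by twisting. Removing the end node $7$ from the Dynkin diagram of $E_7$ (resp.\ node $8$ from $E_8$) yields a maximal parabolic $P_0$ of a split group $G_0$ of type $E_7$ (resp.\ of split type $E_8$), chosen in the isogeny class for which the Levi $L_0$ satisfies $L_0/\Rad(L_0)\cong E_6^{\mathrm{ad}}$ (resp.\ $E_7^{\mathrm{ad}}$), the split adjoint group of the type of $H$; here the remaining subdiagram is connected, so there is no extra factor. Since $H$ is strongly inner, its class in $\HH^1_{\et}(R,\Aut H)$ is the image of a cocycle $\xi$ valued in the simply connected group $H^{\sc}$. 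Composing with the homomorphism $H^{\sc}\to L_0\subset G_0$ (through the derived group of $L_0$, whose adjoint quotient is the $E_6^{\mathrm{ad}}$- resp.\ $E_7^{\mathrm{ad}}$-factor of $L_0/\Rad(L_0)$) gives a cocycle valued in $L_0$, which normalizes $P_0$ because parabolics are self-normalizing. Twisting $G_0$, $P_0$, $L_0$ by it produces an isotropic simple group $G$ of type $E_7$ (resp.\ $E_8$) with parabolic $P$ and Levi $L$, and $L/\Rad(L)$ is the twist of the split factor by $\xi$, hence isomorphic to $H$.

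The main obstacle is the isogeny-type bookkeeping: one must check that the distinguished factor of $L/\Rad(L)$ lies in \emph{exactly} the prescribed isogeny class --- the intermediate group $\SO$ for the $\OO^+$ family and the adjoint group for the $\PGO$, $\PSp$, $\PGL$, $E_6$, $E_7$ families --- which is what forces the choice of isogeny class of $G$ and, in the exceptional cases, is precisely where the hypothesis ``strongly inner'' enters, guaranteeing that $\xi$ extends along $H^{\sc}\to G_0$. The remaining verifications are routine: that adjoining a hyperbolic summand preserves the algebra-with-involution data, including the discriminant and Clifford invariant for type $D$ so that outer forms are covered; that $P$ is a genuine proper parabolic over all of $\Spec R$, using that $R$ is connected so that the relative ranks are constant; and that $G$ is almost-simple of irreducible type in the given rank ranges.
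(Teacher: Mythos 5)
Your proposal is correct in substance, but it follows a genuinely different route from the paper's. The paper constructs nothing by hand: it first invokes the main theorem of \cite{PS-tind} to realize $H^{\sc}$ as a normal subgroup of $[\tilde L,\tilde L]$ for a Levi $\tilde L$ of a parabolic $\tilde P$ in an isotropic \emph{simply connected} group $\tilde G$ (of type $E_7$, $E_8$, $A_l$, $B_{n+1}$, $C_l$ or $D_l$ according to the type of $H$), and then fixes the isogeny class by a root-datum computation carried out on the quasi-split inner forms: one finds the central quotient $G_0$ of $\tilde G_0$ for which $L_0/\Rad(L_0)$ is $H_0$, possibly times one or two split factors of type $A_{d-1}$, and takes $G$ to be the corresponding central quotient of $\tilde G$. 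You instead build $G$ directly at the correct isogeny level: adding a hyperbolic plane to the hermitian/quadratic form over an algebra with involution in the classical cases, and twisting the split adjoint $E_7$ (resp.\ $E_8$) by the strongly inner cocycle $\xi$ through the embedding $H_0^{\sc}\hookrightarrow[L_0,L_0]\subset G_0$ in the exceptional cases; that twisting argument is in effect a self-contained proof of the very case of \cite{PS-tind} that the paper cites, and your isogeny bookkeeping (adjoint Levi quotients inside adjoint ambient groups, the intermediate $\SO$ in the $\OO^+$ case) replaces the paper's quasi-split computation. The trade-off: the paper's argument is shorter, uniform over all six families, and inherits from \cite{PS-tind} the delicate points (characteristic $2$ quadratic pairs, outer forms, the exact parabolic types), while yours avoids relying on the unpublished \cite{PS-tind} and exhibits $(G,P,L)$ concretely. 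Two repairs you should still make: for outer forms (unitary type $A$, and type $D$ with nontrivial discriminant) the algebra $D$ lives over a quadratic \'etale extension $S/R$, so the extra Levi factor is the Weil restriction $\mathrm{R}_{S/R}\PGL_1(D)$ rather than $\PGL_1(D)$ itself --- harmless for the direct-factor conclusion, and it accounts precisely for the paper's ``one or two'' extra factors of type $A_{d-1}$; and your opening classification claim is valid only because trialitarian forms of $D_4$ are excluded from the ``twisted forms of $\PGO_{8,R}$'' --- which is indeed the forced reading of the theorem, since the $*$-action inherited by a $D_4$ Levi factor of any larger simple group has order at most $2$, so the lemma itself would be false if trialitarian forms were allowed.
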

\begin{proof}
Let $H^{\sc}$ be the unique simply connected simple group over $R$ such that $H$ is a central quotient
of $H^{\sc}$. By the main result of~\cite{PS-tind} there exists an isotropic simple simply connected group
$\tilde G$ over $R$ with a parabolic subgroup $\tilde P$ such that $H^{\sc}$ is a normal subgroup
of the semisimple group $[\tilde L,\tilde L]$ (the algebraic
commutator group) for a Levi subgroup $\tilde L$ of $\tilde P$. To be more specific, we note that
if $H$ is a group of strongly inner type $E_6$ resp. $E_7$, then $\tilde G$ is a group of type $E_7$
resp. $E_8$, and $\tilde P$ is a parabolic subgroup of type $P_7$ resp. $P_8$ of $\tilde G$ (with the numbering of the
Dynkin digram as in~\cite{Bu}). If $H^{\sc}$ is a group of type $A_n$, $n\ge 1$, then $\tilde G$ is
an isotropic simply connected group of type $A_l$, $l\ge n+2$, and $\tilde P$ is a parabolic
subgroup of type $P_{d,l+1-d}$, where $d=\frac{l+2-n}2$.
If $H^{\sc}$ is a group of type $B_n$, $n\ge 2$, then $\tilde G$ is
an isotropic simply connected group of type $B_{n+1}$, and $\tilde P$ is a parabolic
subgroup of type $P_1$.
If $H^{\sc}$ is a group of type $C_n$, $n\ge 2$, resp. of type $D_n$, $n\ge 4$, then $\tilde G$ is
an isotropic simply connected group of type $C_l$ resp. $D_l$, where $l\ge n+1$, and $\tilde P$ is a parabolic
subgroup of type $P_d$, where $d=l+1-n$.

Denote by $H_0$, $\tilde G_0$, $\tilde L_0$
etc. the corresponding quasi-split groups over $R$. Computing the root data, one sees that there exists
a central quotient $G_0$ of $\tilde G_0$ such that if $P_0$ and $L_0$ are the images of $\tilde P_0$
and $\tilde L_0$ in $G_0$, then $L_0/\Rad(L_0)$ is isomorphic either to $H_0$, or to a direct product
of $H_0$ by one or two split simple groups of type $A_{d-1}$, $d\ge 2$. Then take $G$ to be the central
quotient of $\tilde G$ that is an inner twisted form of $G_0$. Then, clearly, $H$ is isomorphic to
a direct factor of the quotient $L/\Rad(L)$ of a Levi subgroup of the corresponding parabolic subgroup $P$
of $G$.
\end{proof}

\begin{proof}[Proof of the main theorem]
By Lemma~\ref{lem:embed} $H$ is a direct factor of $L/\Rad(L)$ for a Levi subgroup $L$ of a parabolic
subgroup of an isotropic simple group $G$ over $R$. Since $G$ is isotropic, by the main result of~\cite{PaSV}
(see also~\cite{Pa-newpur}) the map
$$
H_{\et}^1(R,G)\to H_{\et}^1(K,G)
$$
has trivial kernel. Then by Lemma~\ref{lem:rad} the map
$$
H_{\et}^1(R,L/\Rad(L))\to H_{\et}^1(K,L/\Rad(L))
$$
has trivial kernel. Since $H$ is a direct factor of $L/\Rad(L)$, this implies that
the map
$$
H_{\et}^1(R,H)\to H_{\et}^1(K,H)
$$
also has trivial kernel.
\end{proof}


\bigskip
The authors heartily thank N. Vavilov for inspiring conversations on the subject of this
preprint.


\begin{thebibliography}{9}

\bibitem[B]{Bu} N.~Bourbaki, Groupes et alg\`ebres de Lie. Chapitres 4, 5 et 6, Masson, Paris, 1981.

\bibitem[Gr]{Gr} A.~Grothendieck, {\it Le groupe de Brauer II}, S\'em. Bourbaki {\bf 297} (1965/66).

\bibitem[SGA]{SGA} M.~Demazure, A.~Grothendieck, {\it Sch\'emas en groupes}, Lecture Notes in
Mathematics, Vol. 151--153, Springer-Verlag, Berlin-Heidelberg-New York, 1970.

\bibitem[M]{Milne} J.~S.~Milne, Etale cohomology. Princeton Univ. Press, Princeton, N.J., 1980.


\bibitem[PaSV]{PaSV} I. Panin, A. Stavrova, N. Vavilov,
On Grothendieck---Serre's conjecture concerning
principal $G$-bundles over reductive group schemes:I, Preprint (2009), \href{http://www.math.uiuc.edu/K-theory/}
{http://www.math.uiuc.edu/K-theory/}


\bibitem[Pa]{Pa-newpur} I. Panin, On Grothendieck---Serre's conjecture concerning
principal $G$-bundles over reductive group schemes:II, Preprint (2009), \href{http://www.math.uiuc.edu/K-theory/}
{http://www.math.uiuc.edu/K-theory/}


\bibitem[PS]{PS-tind} V. Petrov, A. Stavrova, Tits indices over semilocal rings, Preprint (2008),
available from \href{http://www.arxiv.org/abs/0807.2140}{http://www.arxiv.org/abs/0807.2140}




\end{thebibliography}
\end{document}